\newtheorem{tw}{Theorem} 
\newtheorem*{Mtw}{Main Theorem}
\newtheorem{wn}[tw]{Corollary}
\theoremstyle{remark}
\newtheorem{uw}[tw]{Remark}
\theoremstyle{definition}
\newcommand{\bez}{\setminus}
\newcommand{\fii}{\varphi}
\newcommand{\ro}{\varrho}
\newcommand{\gen}[1]{\langle #1 \rangle}
\newcommand{\map}[3]{#1\colon #2\to #3}
\newcommand{\field}[1]{\mathbb{#1}}
\newcommand{\rr}{\field{R}}
\newcommand{\st}{\;|\;}
\begin{document}

\numberwithin{equation}{section}
\title{Roots of crosscap slides and crosscap transpositions}

\author{Anna Parlak \hspace{1.5em}  Micha\l $\ $Stukow}


\address[]{
Institute of Mathematics, Faculty of Mathematics, Physics and Informatics, University of Gda\'nsk, 80-308 Gda\'nsk, Poland
}

\thanks{The second author is supported by grant 2015/17/B/ST1/03235 of National Science Centre, Poland.}

\email{anna.parlak@gmail.com, trojkat@mat.ug.edu.pl}

\keywords{Mapping class group, nonorientable surface, punctured sphere, elementary braid, \mbox{$Y$--homeomorphism}, crosscap slide, crosscap transposition, roots} 
\subjclass[2000]{Primary 57N05; Secondary 20F38, 57M99}

\begin{abstract}
Let $N_{g}$ denote a closed nonorientable surface of genus $g$. For $g \geq 2$ the mapping class group $\mathcal{M}(N_{g})$ is generated by Dehn twists and one crosscap slide ($Y$--homeomorphism) or by Dehn twists and a crosscap transposition.
Margalit and Schleimer 
observed that Dehn twists have nontrivial roots. 
We give necessary and sufficient conditions for the existence of roots of crosscap slides and crosscap transpositions.
\end{abstract}

\maketitle%

 \section{Introduction}
 Let $N_{g, s}^{n}$ be a connected, nonorientable surface of genus $g$ with $s$ boundary components and $n$ punctures, that is a surface obtained from a connected sum of $g$ projective planes $N_{g}$ by removing $s$ open disks and specifying the set $\Sigma = \lbrace p_{1}, \ldots, p_{n} \rbrace $ of $n$
 distinguished points in the interior of $N_{g}$.
 If $s$ or/and $n$ equals zero, we omit it from notation. The \emph{mapping class group} $\mathcal{M}(N_{g, s}^{n})$ consists of isotopy classes of self--homeomorphisms $h: N_{g,s}^{n} \rightarrow N_{g,s}^{n}$ fixing boundary components pointwise and such that $h(\Sigma) = \Sigma$.  The mapping class group $\mathcal{M}(S_{g,s}^{n})$ of an orientable surface is defined analogously, but we consider only orientation--preserving maps. If we allow orientation--reversing maps, we obtain the \emph{extended mapping class group} $\mathcal{M}^{\pm}(S_{g,s}^{n})$. By abuse of notation, we identify a homeomorphism with its isotopy class. 
 
 In the orientable case, the mapping class group $\mathcal{M}(S_{g})$ is generated by Dehn twists \cite{Lick1}. As for nonorientable surfaces, Lickorish proved that Dehn twists alone do not generate $\mathcal{M}(N_{g})$, $g \geq 2$. This group is generated by Dehn twists and one crosscap slide ($Y$--homeomorphism)~\cite{Lick3}.
 A presentation for $\mathcal{M}(N_{g})$ using these generators was obtained by Stukow \cite{StukowSimpSzepPar}. This presentation was derived from the presentation given by Paris and Szepietowski \cite{SzepParis}, which used as generators Dehn twists and yet another homeomorphisms of nonorientable surfaces, so--called crosscap transpositions. 
 
  
 
 Margalit and Schleimer discovered a surprising property of Dehn twists: in the mapping class group of a closed, connected, orientable surface $S_{g}$ of genus $g \geq 2$, every Dehn twist has a nontrivial root \cite{MargSchleim}. It is natural to ask if crosscap slides and crosscap transpositions also have a similar property. The main goal of this paper is to prove the following:
 \begin{Mtw}
In $\mathcal{M}(N_{g})$ a nontrivial root of a crosscap transposition [or a crosscap slide] exists
if and only if $g \geq 5$ or $g = 4$ and the complement of the support of this crosscap transposition [or crosscap slide] is orientable. \label{main}
\end{Mtw}
 
%
%
 
 \section{Preliminaries}
 \subsection*{Crosscap transpositions and crosscap slides.}
 Let $N = N_{g}$ be a nonorientable surface of genus $g \geq 2$. Let $\alpha$ and $\mu$ be two simple closed curves in $N$ intersecting in one point, such that $\alpha$ is two--sided and $\mu$ is one--sided. A regular neighborhood of $\mu \cup \alpha$ is homeomorphic to the Klein bottle with a hole denoted by $K$. A convenient model of $K$ is a disk with 2 crosscaps, see Figure \ref{UY}. In this figure shaded disks represent crosscaps, thus the boundary points of these disks are identified by the antipodal map.
 \begin{figure}[h]
 	\begin{center}
 		\includegraphics[width=0.95\textwidth]{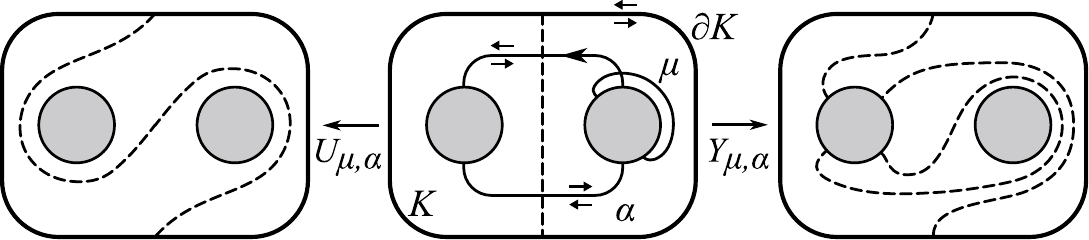}
 		\caption{Crosscap transposition and crosscap slide.}
 		\label{UY} 
 	\end{center}
 \end{figure}
 
 A \emph{crosscap transposition} $U_{\mu,\alpha}$ specified by $\mu$ and $\alpha$ is 
 a homeomorphism of $K$ which interchanges two crosscaps keeping the boundary of $K$ fixed \cite{SzepParis}. It may be extended by the identity to a homeomorphism of $N$. If $t_{\alpha}$ is the Dehn twist about $\alpha$ (with the direction of the twist indicated by small arrows in Figure \ref{UY}), then $Y_{\mu,\alpha} = t_{\alpha}U_{\mu,\alpha}$ is a \emph{crosscap slide} of $\mu$ along $\alpha$, that is the effect of pushing $\mu$ once along $\alpha$ keeping the boundary of $K$ fixed. Note that $U_{\mu,\alpha}^{2}=Y_{\mu,\alpha}^{2}=t_{\partial K}$. 
 
 
%

  

If $g$ is even, then the complement of $K$ in $N_g$ can be either a nonorientable surface $N_{g-2,1}$ or an orientable surface $S_{\frac{g-2}{2},1}$, therefore on surfaces of even genus two conjugacy classes of crosscap slides and crosscap transpositions exist.

 \subsection*{Notation}
 Represent $N_g$ as a connected sum of $g$ projective planes and let $\mu_1,\ldots,\mu_{g}$ be 
 one--sided circles that correspond to crosscaps as in indicated in Figure \ref{fig_surface}.
 By abuse of notation, we identify $\mu_i$ with the corresponding crosscap.
 \begin{figure}[h]
 	\begin{center}
 		\includegraphics[width=0.8\textwidth]{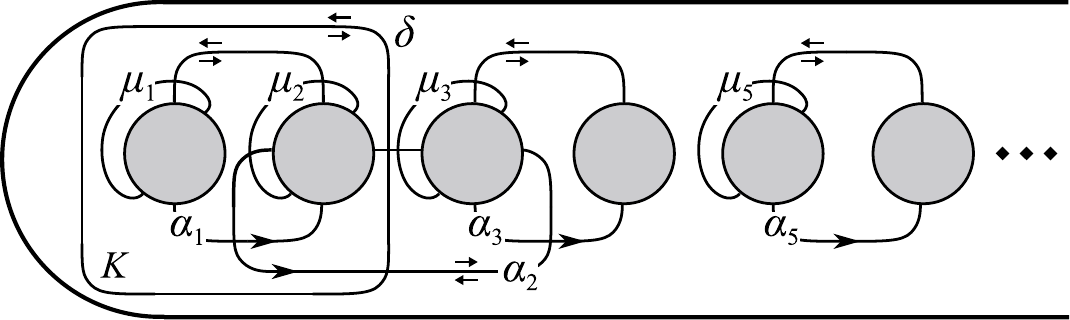}
 		\caption{Nonorientable surface $N_g$.}
 		\label{fig_surface} 
 	\end{center}
 \end{figure}

 If $\alpha_1,\ldots,\alpha_{g-1}$ are two--sided circles indicated in the same figure, then for each $i=1,\ldots,g-1$ by $t_{\alpha_i},u_i,y_i$ we denote the Dehn twist about $\alpha_i$, the crosscap transposition $U_{\mu_{i+1},\alpha_i}$, and the crosscap slide $Y_{\mu_{i+1},\alpha_i}$, respectively.

 
\subsection*{Relations in the mapping class group of a nonorientable surface} 
A full presentation for $\mathcal{M}(N_{g})$ is given in \cite{SzepParis, StukowSimpSzepPar}. Among others, the following relations hold in $\mathcal{M}(N_{g})$:
\begin{itemize}
	\item[(R1)] $u_{i}u_{j} = u_{j}u_{i}$ for $i,j = 1, \ldots, g-1, \ |i-j|>1$,
	\item[(R2)] $u_{i}u_{i+1}u_{i} = u_{i+1}u_{i}u_{i+1}$ for $i=1,\ldots, g-2$,
	\item[(R3)] $(u_{1} \ldots u_{g-1})^{g} = 1$,
	\item[(R4)] $t_{\alpha_{i}}u_{j} = u_{j}t_{\alpha_{i}}$ and hence $y_{i}u_{j} = u_{j}y_{i}$ for $i,j=1, \ldots, g-1, \ |i-j|>1$ 
\end{itemize}
It is straightforward to check that relations (R1)--(R3) imply
\begin{itemize}
 \item[(R5)] $(u_{1}^{2}u_{2}\ldots u_{g-1})^{g-1} = 1$
\end{itemize}
Geometrically $u_{1}\ldots u_{g-1}$ is a cyclic rotation of $\mu_1,\ldots,\mu_{g}$ and $u_{1}^{2}u_{2}\ldots u_{g-1}$ is a cyclic rotation of $\mu_2,\ldots,\mu_{g}$ around $\mu_1$. In particular, 
\begin{itemize}
 \item[(R6)] $(u_{1} \ldots u_{g-1})^{g} = (u_{1}^{2}u_{2} \ldots u_{g-1})^{g-1} = t_{\partial N_{g,1}}$ in $\mathcal{M}(N_{g,1})$.
\end{itemize}

We also have the following chain relation between Dehn twists (Proposition 4.12 of \cite{MargaliFarb}): if $k \geq 2$ is even and $c_{1}, \ldots, c_{k}$ is a chain of simple closed curves in a surface $S$, such that the boundary of a closed regular neighborhood of their union is isotopic to $d$, then 
\begin{itemize}
\item[(R7)] $(t_{c_{1}}\ldots t_{c_{k}})^{2k+2} = t_{d}$.
 \end{itemize}
\section{Proof of the Main Theorem}
\begin{uw}\label{rem:odd}
	Automorphisms of $H_{1}(N_{g}; \mathbb{R})$ induced by crosscap transpositions and crosscap slides have determinants equal to $-1$, so if a root of a crosscap slide or a crosscap transposition exists, it must be of odd degree. 
\end{uw}
Let $K$ be a subsurface of $N_g$ that is a Klein bottle with one boundary component $\delta$ and which contains $\mu_1$ and $\mu_2$ (Figure \ref{fig_surface}). In particular $u_1^2=y_1^2=t_{\delta}$.

\subsection*{The case of ${g\geq 5}$ odd.}
Let $p,q \in \mathbb{Z}$ be such that $2p + q(g-2) = 1$. By relations (R6) and (R1),
%
\[\begin{aligned}
   u_{1}^{2}&=t_\delta=(u_{3} \ldots u_{g-1})^{g-2}\\
   u_{1}^{2p}&=(u_{3} \ldots u_{g-1})^{p(g-2)}\\
   u_{1}&=\left((u_{3}\ldots u_{g-1})^{p}u_{1}^{q}\right)^{g-2}
  \end{aligned}
\]
Analogously, by relations (R6), (R1) and (R4), $y_{1} = \left((u_{3}\ldots u_{g-1})^{p}y_{1}^{q}\right)^{g-2}$.

\subsection*{The case of ${g \geq 6}$ even and ${N_{g} \backslash K}$ nonorientable.}
Let $p,q \in \mathbb{Z}$ be such that $2p + q(g-3) = 1$. By relations (R6) and (R1), 
\[\begin{aligned}
u_{1}^{2}&=t_\delta=(u_{3}^{2}u_{4} \ldots u_{g-1})^{g-3}\\
u_{1}^{2p}&=(u_{3}^{2}u_{4} \ldots u_{g-1})^{p(g-3)}\\
u_{1}&=((u_{3}^{2}u_{4} \ldots u_{g-1})^{p}u_{1}^{q})^{g-3}.
\end{aligned}\]
Analogously, by relations (R6), (R1) and (R4), $y_{1} = ((u_{3}^{2}u_{4} \ldots u_{g-1})^{p}y_{1}^{q})^{g-3}$.

\subsection*{The case of ${g \geq 4}$ even and ${N_{g} \backslash K}$ orientable.}
Suppose now that crosscap transposition $u$ and crosscap slide $y$ are supported in a Klein bottle with a hole $K$ such that $N_g\bez K$ is orientable. 
If $c_{1}, \ldots, c_{g-2}$ is a chain of two--sided circles in $N_g\bez K$, then by relation (R7),
\[\begin{aligned}
u_{1}^{2}&=t_{\partial K}=(t_{c_{1}}\ldots t_{c_{g-2}})^{2g-2}\\
\left(u_{1}^{2}\right)^{\frac{g}{2}}&=\left((t_{c_{1}}\ldots t_{c_{g-2}})^{2g-2}\right)^{\frac{g}{2}}\\
u_{1}&=((t_{c_{1}}\ldots t_{c_{g-2}})^{g}u_{1}^{-1})^{g-1}.
\end{aligned}\]
Analogously, $y_{1} = ((t_{c_{1}}\ldots t_{c_{g-2}})^{g}y_{1}^{-1})^{g-1}$.
\subsection*{The case of $g=2$.}
	Crosscap slides and a crosscap transpositions are primitive in $\mathcal{M}(N_{2})$ because \cite {Lick3}
	\[\begin{aligned}
	\mathcal{M}(N_{2})&\cong \langle t_{\alpha_{1}}, y_{1}  \st  t_{\alpha_{1}}^{2} = y_{1}^{2} = (t_{\alpha_{1}}y_{1})^{2} = 1 \rangle\\
	&\cong \langle t_{\alpha_{1}}, u_{1}  | \ t_{\alpha_{1}}^{2} = u_{1}^{2} = (t_{\alpha_{1}}u_{1})^{2} = 1 \rangle \cong \mathbb{Z}_{2} \oplus \mathbb{Z}_{2}.   
	  \end{aligned}\] 
\subsection*{The case of $g=3$.} 
\begin{uw}\label{rem:order6}
It is known that the mapping class group $\mathcal{M}(N_{3})$ is hyperelliptic \cite{Stukow_HiperOsaka} and has the central element $\ro$ such that $\mathcal{M}(N_{3})/\gen{\ro}$ is the extended mapping class group $\mathcal{M}^{\pm}(S_{0}^{3,1})$ of a sphere with 4 punctures. Two upper subscripts mean that we have four punctures on the sphere, but one of them must be fixed. This implies \cite{Buskirk} that the maximal finite order of an element in $\mathcal{M}^{\pm}(S_{0}^{3,1})$ is 3, and hence the maximal finite order of an element in $\mathcal{M}(N_{3})$ is 6. Moreover, each two rotations of order 3 in $\mathcal{M}^{\pm}(S_{0}^{3,1})$ are conjugate, which easily implies that each two elements of order 6 in $\mathcal{M}(N_{3})$ are conjugate. The details of the proof of the last statement are completely analogous to that used in \cite{MaxHyp}, hence we skip them.

The same conclusion can be obtained also purely algebraically: it is known \cite{Scharlemann} that $\mathcal{M}(N_{3})\cong\mathrm{GL}(2,\mathbb{Z})$ and the maximal finite order of an element in $\mathrm{GL}(2,\mathbb{Z})$ is 6. Moreover, there is only one conjugacy class of such elements in $\mathrm{GL}(2,\mathbb{Z})$ --- for details see for example Theorem 2 of \cite{Meskin}.
\end{uw}
We will show that crosscap transpositions do not have nontrivial roots in $\mathcal{M}(N_{3})$. 
Suppose that $x \in \mathcal{M}(N_{3})$ exists such that $x^{2k+1} = u_{1}$, where $k\geq 1$ (see Remark \ref{rem:odd}). Then 
\[x^{4k+2}=u_1^2=t_\delta=1.\]
By Remark \ref{rem:order6}, $k=1$. Moreover, by relation (R7),
\[(t_{\alpha_1}t_{\alpha_2})^6=t_\delta=1,\]
hence $x$ is conjugate to $t_{\alpha_1}t_{\alpha_2}$. This contradicts Remark \ref{rem:odd}, because Dehn twists induce automorphisms of $H_1(N_3;\rr)$ with determinant equal to 1 and $x^3=u_{1}$.

In the case of a crosscap slide the argument is completely analogous, hence we skip the details.

\subsection*{The case of $g=4$ and ${N_{4} \backslash K}$ nonorientable.}
If $N_{4} \backslash K$ is nonorientable, then $\delta$ cuts $N_4$ into two Klein bottles with one boundary component: $K$ and $K_1$. Moreover, as was shown in Appendix A of \cite{Stukow_twist},
\[\begin{aligned}
   \mathcal{M}(K)&=\langle t_{\alpha_{1}}, u_{1} \ | \ u_{1}t_{\alpha_{1}} = t_{\alpha_{1}}^{-1}u_{1}\rangle\\
   \mathcal{M}(K_1)&=\langle t_{\alpha_{3}}, u_{3} \ | \ u_{3}t_{\alpha_{3}} = t_{\alpha_{3}}^{-1}u_{3}\rangle.
  \end{aligned}
\]
If $x \in \mathcal{M}(N_{4})$ exists such that $x^{2k+1} = u_{1}$ and $k\geq 1$ (see Remark \ref{rem:odd}), then \[x^{4k+2}=u_1^2=t_\delta.\]
In particular, $x$ commutes with $t_\delta$ and
\[t_\delta=xt_{\delta}x^{-1}=t_{x(\delta)}^{\pm}.\]
By Proposition 4.6 of \cite{Stukow_twist}, up to isotopy of $N_4$, $x(\delta)=\delta$. Because $u_1$ does not interchange two sides of $\delta$ and does not reverse the orientation of $\delta$, $x$ has exactly the same properties. Therefore, we can assume that $x$ is composed of maps of $K$ and $K_1$. Moreover $u_1=x^{2k+1}$ interchanges $\mu_1$ and $\mu_2$ and does not interchange $\mu_3$ and $\mu_4$, hence 
\[\begin{aligned}
  x&=t_{\alpha_{1}}^{k_{1}}u_{1}^{2m_{1}+1}t_{\alpha_{3}}^{k_{2}}u_{3}^{2m_{2}}=
t_{\alpha_{1}}^{k_{1}}u_{1}t_{\alpha_{3}}^{k_{2}}t_{\delta}^{m_1+m_2}\\
x^2&=t_{\alpha_{3}}^{2k_{2}}t_{\delta}^{2m_1+2m_2+1}
\end{aligned}\]
But then 
\[t_\delta=(x^2)^{2k+1}=t_{\alpha_{3}}^{2k_{2}(2k+1)}t_{\delta}^{(2m_1+2m_{2}+1)(2k+1)}\]
which is a contradiction, because Dehn twists about disjoint circles generate a free abelian group (Proposition 4.4 of \cite{Stukow_twist}).

In the case of a crosscap slide the argument is completely analogous, hence we skip the details.
%
%
%
\section{Roots of elementary braids in the mapping class group of $n$-punctured sphere.}
 Margalit and Schleimer observed in \cite{MargSchleim} that if $g\geq 5$, then roots of elementary braids in $\mathcal{M}(S_{0}^g)$ exist. The Main Theorem implies slightly stronger version of that result. 

\begin{wn}
	An elementary braid in the mapping class group $\mathcal{M}(S_{0}^n)$ or in the extended mapping class group $\mathcal{M}^\pm(S_{0}^n)$	
	of \mbox{$n$-punctured} sphere has a nontrivial root if and only if $n \geq 5$. 
\end{wn}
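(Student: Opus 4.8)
The plan is to derive the Corollary from the Main Theorem by comparing $\mathcal{M}(S_0^n)$ with $\mathcal{M}(N_n)$. Capping each puncture of the $n$--punctured sphere with a crosscap (gluing in a M\"obius strip) turns $S_0^n$ into the closed nonorientable surface $N_n$, and since a self--homeomorphism of $S_0^n$ can be isotoped to the identity on a small disk around each puncture and then extended by the identity over the M\"obius strips --- isotopies being extended in the same way --- this induces homomorphisms $\psi\colon\mathcal{M}(S_0^n)\to\mathcal{M}(N_n)$ and $\psi^\pm\colon\mathcal{M}^\pm(S_0^n)\to\mathcal{M}(N_n)$. Under $\psi$ the elementary braid $\sigma_i$, the half--twist interchanging the $i$-th and $(i+1)$-st punctures, goes to the crosscap transposition $u_i$: a disk containing two punctures becomes, after capping, a Klein bottle with one boundary component, and the half--twist becomes the homeomorphism interchanging its two crosscaps. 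Setting this up precisely --- in particular verifying $\psi(\sigma_i)=u_i$ and, for $n=4$, identifying the orientability type of the complement of the support of $u_1$ --- is the part of the argument I expect to require the most care.

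For $n\ge 5$ I would reproduce, word for word inside $\mathcal{M}(S_0^n)$, the computation from the proof of the Main Theorem for genus $g=n$. The only relation that has to be re--established in the new setting is $\sigma_1^2=(\sigma_3\cdots\sigma_{n-1})^{n-2}$ when $n$ is odd, and $\sigma_1^2=(\sigma_3^2\sigma_4\cdots\sigma_{n-1})^{n-3}$ when $n$ is even. Here $\sigma_1^2$ is the Dehn twist about a curve $c$ bounding a twice--punctured disk around the punctures $1,2$; the subgroup $\langle\sigma_3,\dots,\sigma_{n-1}\rangle$ is the braid group on the $n-2$ strands at the punctures $3,\dots,n$, and both displayed right--hand sides equal the central full twist of this braid group --- the first by definition of the full twist, the second by a short manipulation using that conjugation by $\delta=\sigma_3\cdots\sigma_{n-1}$ shifts the generators ($\delta^j\sigma_3\delta^{-j}=\sigma_{3+j}$) --- which is the Dehn twist about the boundary of the subdisk carrying the punctures $3,\dots,n$; on the sphere that curve is isotopic to $c$, so the two twists coincide. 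Choosing $p,q\in\mathbb{Z}$ with $2p+q(n-2)=1$ (possible because $n$ is odd) resp.\ $2p+q(n-3)=1$ (possible because $n$ is even), and using that $\sigma_1$ commutes with $\sigma_3,\dots,\sigma_{n-1}$, the same three lines as in the Main Theorem give $\sigma_1=\bigl((\sigma_3\cdots\sigma_{n-1})^p\sigma_1^q\bigr)^{n-2}$ resp.\ $\sigma_1=\bigl((\sigma_3^2\sigma_4\cdots\sigma_{n-1})^p\sigma_1^q\bigr)^{n-3}$, a root of odd degree $\ge 3$. As all elementary braids are conjugate, each of them has such a root, in $\mathcal{M}(S_0^n)$ and hence in $\mathcal{M}^\pm(S_0^n)$. (If $\psi^\pm$ is known to be injective, this can be read off directly from the corresponding identity in $\mathcal{M}(N_n)$, bypassing the braid--group step.)

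For $n\le 4$ I would argue by contradiction, needing only that $\psi^\pm$ is a homomorphism with $\psi^\pm(\sigma_1)=u_1$: a nontrivial root $z^{k}=\sigma_1$ in $\mathcal{M}^\pm(S_0^n)$ would produce a nontrivial root $\psi^\pm(z)^{k}=u_1$ of a crosscap transposition in $\mathcal{M}(N_n)$. For $n=2$ and $n=3$ this is impossible by the Main Theorem (genus $2$ and $3$). For $n=4$ the support of $u_1$ is a Klein bottle with a hole whose complement in $N_4$ contains the two crosscaps capping the punctures $3,4$, hence is nonorientable, so the Main Theorem again rules out a root. (For $n=1$ there is no elementary braid.) Together with the previous paragraph this proves the Corollary.

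The main obstacle is thus the construction in the first paragraph: defining $\psi$ on isotopy classes, checking $\psi(\sigma_i)=u_i$, and pinning down the orientability of the complement of the support of $u_1$ in the case $n=4$ so as to land in the branch of the Main Theorem that forbids roots; the rest is bookkeeping or a direct reuse of arguments already in the paper. No sign ambiguity in the Dehn twists occurs, since on an \emph{oriented} surface a Dehn twist is determined by the ambient orientation and the underlying unoriented curve alone.
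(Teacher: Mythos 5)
Your proposal is correct and is essentially the paper's own argument: the paper simply cites the blow-up monomorphism $\varphi\colon\mathcal{M}^{\pm}(S_0^n)\to\mathcal{M}(N_n)$ (Proposition 2.4 of Paris--Szepietowski), which sends elementary braids to crosscap transpositions, and observes that the roots constructed in the Main Theorem are words in the $u_i$, hence lie in $\varphi(\mathcal{M}(S_0^n))$, so existence pulls back for $n\ge 5$ and nonexistence pushes forward for $n\le 4$ (with nonorientable complement when $n=4$). Your only variations --- re-deriving the roots upstream via the full-twist identities in the braid group instead of pulling them back through the injective $\varphi$, and spelling out the nonorientability of the complement for $n=4$ --- are harmless, though for $n=2,3$, where $\sigma_1$ and $u_1$ have finite order, you should retain injectivity of $\varphi$ rather than ``only a homomorphism,'' so that the image of a nontrivial root cannot degenerate to $u_1$ itself.
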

\begin{proof}
 By Proposition 2.4 of \cite{SzepParis}, there is a monomorphism
\[\map{\fii}{\mathcal{M}^{\pm}(S_{0}^g)}{\mathcal{M}(N_g)}\]
which is induced by blowing up each puncture to a crosscap. In particular, this monomorphism sends elementary braids to crosscap transpositions. Moreover, all roots of crosscap transpositions constructed in the proof of the Main Theorem are elements of $\fii(\mathcal{M}(S_{0}^g))$.
\end{proof}
\bibliographystyle{abbrv}

\end{document}